\title{On the pluricanonical map and the canonical volume of projective 4-folds of general type}
\author{Jianshi Yan}
\address{\rm Department of Mathematics, Northeastern University, Shenyang 110819, China}
\email{yanjs@mail.neu.edu.cn}
\newcommand{\bQ}{{\mathbb Q}}
\newcommand{\bP}{{\mathbb P}}
\newcommand{\roundup}[1]{\lceil{#1}\rceil}
\newcommand\Vol{\text{\rm Vol}}
\newcommand{\lsgeq}{\succcurlyeq}
\newcommand{\lsleq}{\preccurlyeq}
\newcommand{\simQ}{\sim_{\mathbb{Q}}}
\newtheorem{thm}{Theorem}[section]
\newtheorem{lem}[thm]{Lemma}
\theoremstyle{definition}
\newtheorem{rem}[thm]{Remark}
\theoremstyle{remark}
\newtheorem{proof of thm}{\bf Proof of Theorem \ref{modified lemma for 3-fold}}
\begin{document}
\begin{abstract}

For nonsingular projective 4-folds V of general type with plurigenus $P_{m_0}(V) \geq 2$ for some positive integer $m_0$, we show that $\varphi_{m}$ is birational onto its image for all integers $m \geq 76m_0+77$ and the canonical volume $\Vol(V)$ has the lower bound $\frac{1}{1680m_0(m_0+1)^3}$. This improves earlier results of Meng Chen.

\end{abstract}
\maketitle

\pagestyle{myheadings}
\markboth{\hfill J. Yan\hfill}{\hfill On pluricanonical map and canonical volume of projective 4-folds of general type \hfill}
\numberwithin{equation}{section}

\section{\bf Introduction}

Understanding the behavior of pluricanonical maps and the lowest bound of canonical volumes of projective varieties has been a major question in birational geometry. A crucial theorem given by Hacon-McKernan \cite{H-M}, Takayama \cite{Ta} and Tsuji \cite{Tsuji} shows that, for any integer $n>0$, there are 
optimal constants $r_n \in \mathbb{Z}_{>0}$ and $v_n \in \mathbb{Q}_{>0}$
such that the pluricanonical map $\varphi_m$ := $\varphi_{|mK_V|}: V \dashrightarrow \mathbb{P}(H^0(V, \mathcal{O}_V(mK_V)))$ is birational onto its image for all $m \geq r_n$ and the canonical volume $\Vol(V):=\lim\limits_{m\to \infty}\frac{n!h^0(V, mK_V)}{m^{n}} \geq v_n$ for all nonsingular projective $n$-folds $V$ of general type.
Therefore, it is interesting to know the values of the numbers $r_n$ and $v_n$.

For $n=1$, it is known that $r_1=3$. For $n=2$, Bombieri showed in \cite{Bom} that $r_2=5$. For 3-folds, Iano-Fletcher \cite{IF}, Chen-Chen \cite{EXP1, EXP2, EXP3} and Chen \cite{Delta18} have shown that $27 \leq r_3 \leq 57$. As the classification of terminal singularities of 4-folds is still unclear, there is no effective Riemann-Roch formula for 4-folds to calculate $\chi(mK)$ explicitly. Consequently, it is still unknown when the plurigenus $P_m(V):= \dim H^0(V, \mathcal{O}_V(mK_V)) \geq 2$ holds for a 4-fold $V$ of general type and very little is known about $r_4$. For a nonsingular projective 3-fold $V$ of general type, suppose $P_{m_0}(V) \geq 2$, Koll\'ar  first proved in \cite[Corollary 4.8]{Kol} that the $(11m_0+5)$-canonical map is birational. Then Chen \cite{Q-div} improved the method of Koll\'ar and showed that the $m$-canonical map of $V$ is birational onto its image for all $m \geq 5m_0+6$. For a nonsingular projective 4-fold $V$ of general type, granted that $P_{m_0}(V) \geq 2$ for some positive integer $m_0$, Meng Chen proved in \cite{Chen10} that $\varphi_{m}$ is birational onto its image for all integers $m \geq 151m_0+77$.

As for the lower bound of the volume, given a projective variety $V$ of general type, by the minimal model program(see \cite{BCHM, KMM,  K-M,  Siu}), one can always find a minimal model $X$ birational to $V$. For the canonical volume, one knows that $\Vol(V)=K_X^{\dim X}$. It is known that $v_1=2$ and $v_2=1$.
As in dimension three or higher, a minimal model may have singularities, thus $v_n$ is only a positive rational number.
Chen-Chen (\cite{EXP1, EXP2, EXP3}) and Iano-Fletcher \cite{IF} showed that $\frac{1}{1680} \leq v_3 \leq \frac{1}{420}$. When $n=4$, some partial results are known:

$\bullet$ In 1992, Kobayashi \cite{Kob} proved that for a minimal projective 4-fold $Y$ of general type with $p_g(Y) \geq 2$ and $\dim\varphi_{|K_Y|}(Y)=4$, $\mathrm{Vol}(Y) \geq 2p_g(Y)-8$.

$\bullet$ In 2007, Chen \cite{MA} proved that for a minimal projective 4-fold $Y$ of general type with only canonical singularities and $p_g(Y) \geq 2$, if $Y$ is not canonically fibered by either 3-folds of geometric genus 1 or any irrational pencil of 3-folds, then $\mathrm{Vol}(Y) \geq \frac{1}{81}$.

$\bullet$ In 2021, Yan \cite{Y} proved that for a nonsingular projective 4-fold $V$ of general type with $p_g(V) \geq 2$, $\mathrm{Vol}(V) \geq \frac{1}{480}$.

$\bullet$ In 2020, Chen, Jiang and Li \cite{CJL} proved that for a minimal projective 4-fold $Y$ of general type with $p_g(Y) \geq 2$, when $\dim \overline{\varphi_{|K_Y|}(Y)}=3$ (or $\dim \overline{\varphi_{|K_Y|}(Y)}=2$ respectively), $\mathrm{Vol}(Y) \geq \frac{2}{3}$ (or $\mathrm{Vol}(Y) \geq \frac{1}{6}$ respectively).

We can see that the partial results known for the volume all have the prerequisite that the geometric genus of the 4-fold is greater than or equal to $2$, which does not always hold. So we want to consider the more general case.

This paper aims to improve the result of Meng Chen in \cite{Chen10} on the birationality of the pluricanonical map $\varphi_m$ and to give a lower bound of the volume of nonsingular projective 4-folds of general type, in terms of $m_0$.

\begin{thm}\label{main theorem}
Let $V$ be a nonsingular projective $4$-fold of general type with $P_{m_0}(V) \geq 2$ for some positive integer $m_0$.
Then
\begin{itemize}
\item[(1)] $P_m(V)\ge 2$ for all $m \geq 38m_0+39$;
\item[(2)] $\varphi_{m}$ is birational onto its image for all integers $m \geq 76m_0+77$;
\item[(3)] $\Vol(V) \geq \frac{1}{1680m_0(m_0+1)^3}$.
\end{itemize}
\end{thm}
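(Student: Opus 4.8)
The plan is to follow the classical Chen–Chen style induction on pluricanonical systems, refining Meng Chen's argument from \cite{Chen10} with the sharper numerical bookkeeping that the statement's improved constants demand. The starting point is to pass to a minimal model $X$ of $V$ with at worst terminal singularities, so that $\Vol(V) = K_X^4$, and to fix a sufficiently large and divisible resolution $\pi\colon X' \to X$ on which one can work with a fixed log-resolution of the movable part of $|m_0 K_{X'}|$. Since $P_{m_0}(V) \ge 2$, the linear system $|m_0 K_{X'}|$ has a nontrivial movable part $M_{m_0}$ defining a nontrivial fibration (after Stein factorization) $f\colon X' \to B$ whose general fiber $F$ is a smooth projective $3$-fold of general type, or else $\varphi_{m_0}$ is already generically finite of degree onto a variety of dimension $\ge 2$. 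In either situation one extracts from $F$ a divisor that carries positivity, and the game is to bound $K_X^4$ from below and to force birationality of $\varphi_m$ by repeatedly applying the extension/restriction technique together with the known sharp results for $3$-folds (here one invokes $r_3 \le 57$, $v_3 \ge 1/1680$, and more precisely the effective $5m_0+6$ birationality statement of Chen in \cite{Q-div} applied on the fibers).

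**The key steps, in order, are:** (i) set up the fibration $f\colon X' \to B$ induced by $|m_0 K_{X'}|$ and distinguish the cases $\dim B = 1,2,3,4$; (ii) in the positive-dimensional-base cases, write $\pi^* K_X \simQ a F + (\text{effective})$ with $a$ controlled by $m_0$, restrict to a general fiber $F$ and run the parallel argument one dimension down, using the $3$-fold effective birationality and volume bounds as the engine; (iii) combine the slope-type inequality $K_{X'}^4 \ge (\text{coefficient}) \cdot (K_F^3 \text{ or } \Vol\text{-data on } F) \cdot (\text{degree/genus of }B)$ with the bound $v_3 \ge 1/1680$ to obtain $K_X^4 \ge \frac{1}{1680 m_0(m_0+1)^3}$; (iv) to get birationality for $m \ge 76m_0+77$, use that $|mK_{X'}|$ separates the fibers of $f$ once $m$ is large compared to $m_0$, and separates points on a general fiber once the restricted system there is birational, which by the $3$-fold result needs roughly $5\cdot(\text{something}\sim m_0)+6$-many; carefully accounting for the "something" — the coefficient by which $K_F$ or the restricted canonical divisor is bounded below in terms of $m_0$ — is where the constant $76$ rather than $151$ comes from. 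Part (1), $P_m(V) \ge 2$ for $m \ge 38m_0+39$, is the easiest: it follows by producing a single nonzero section of $\OO_X(mK_X)$ from the fibration structure plus a section of the movable part, and $38m_0+39$ is essentially $\lceil (76m_0+77)/2 \rceil$.

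**The hard part will be** the $\dim B = 1$ case (and to a lesser extent $\dim B = 2$), where the fiber $F$ is a $3$-fold of general type and one must control, uniformly in $m_0$, both the smallest multiple of $K_F$ (or of the restricted divisor $\pi^*K_X|_F$) that is effective and the rational coefficient $a$ in $\pi^*K_X \simQ a F + E$: a naive bound loses a factor roughly matching the gap between $151$ and $76$, so one needs the refined inequalities of Chen–Chen together with a careful choice of which divisor on $F$ to restrict further (to a general curve or surface in $F$) in order not to waste positivity. I expect the technical heart to be a lemma of the shape "if $F$ is a smooth $3$-fold of general type appearing as a general fiber and $\sigma\colon X' \to \cdots$ is set up as above, then $(\pi^*K_X|_F)$ is $\bQ$-effective and $\varphi_{|\lfloor (t+1)\pi^*K_X|_F\rfloor|}$ restricted to $F$ is birational for $t \gtrsim$ an explicit constant", whose proof mirrors the $3$-fold theory but must be run with the weights coming from the $4$-fold ambient space; once that lemma is in place, assembling parts (1)–(3) is bookkeeping, and the volume bound $\frac{1}{1680m_0(m_0+1)^3}$ drops out of chaining the fiber-volume estimate through the fibration with the cube coming from the three successive dimension reductions $4 \to 3 \to 2 \to 1$.
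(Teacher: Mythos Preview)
Your skeleton --- pass to a minimal model, resolve, restrict to a generic irreducible element $T'$ of $\Mov|m_0K|$ (a $3$-fold of general type), and push the problem down one dimension via Kawamata--Viehweg vanishing --- matches the paper's. But two of your key mechanisms are off, and the first would not produce the stated constants.

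\textbf{Birationality.} You propose to invoke the $5m_0{+}6$ result of \cite{Q-div} on the fibers. That result concerns the pluricanonical system $|mK_T|$ of the $3$-fold itself, but what actually lands on $T'$ after vanishing is an \emph{adjoint} system $|K_{T'}+\roundup{Q_\lambda}|$ with $Q_\lambda\equiv\lambda\,\pi_T^*(K_T)$ and $\lambda\approx\frac{m-1-m_0}{m_0+1}$; this is not $|mK_{T'}|$ for any $m$. The paper instead proves a dedicated criterion (Lemma~\ref{lem: biration adjoint linear system on 3fold}): such an adjoint system is birational once $\lambda>4t_1+3$, where $t_1=\delta(T)$. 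The numerical input is then the dichotomy $\delta(T)\le 10$, or $\delta(T)\le 18$ with $p_g(S)>0$, from Remark~\ref{rem: nonvanishing adjoint linear system on 3fold}, giving the threshold $\lambda>75$ and hence $m\ge 76m_0+77$. Trying to route through $|mK_T|$ and quote $5\delta(T)+6$ would both require an extra comparison step and yield a worse constant. The same issue affects part~(1): $38m_0+39$ is not a halving heuristic on $76m_0+77$ but comes from the separate nonvanishing threshold $\lambda>37$ (Lemma~\ref{lem: nonvanishing adjoint linear system on 3fold}), obtained from $3t_1+2$ (resp.\ $2t_1+1$ when $p_g(S)>0$) under the same dichotomy.

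\textbf{Volume.} The cube $(m_0+1)^3$ does \emph{not} come from three successive dimension drops $4\to3\to2\to1$. It comes from a single restriction to the $3$-fold $T'$ together with the canonical restriction inequality $\pi^*(K_Y)|_{T'}\ge\frac{1}{m_0+1}\pi_T^*(K_T)$ (derived from Kawamata's extension theorem, \eqref{cri}), which is then \emph{cubed}:
\[
K_Y^4\;\ge\;\frac{1}{m_0}\bigl(\pi^*(K_Y)|_{T'}\bigr)^3\;\ge\;\frac{1}{m_0(m_0+1)^3}K_T^3\;\ge\;\frac{1}{1680\,m_0(m_0+1)^3},
\]
the last step being the universal $3$-fold bound $K_T^3\ge 1/1680$ from \cite{EXP3}. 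A chain of three reductions, each with its own $m_0$-type loss, would produce a different exponent structure and would not match the stated bound.
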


\section{\bf Preliminaries}\label{pre}

Throughout we work over an algebraically closed field $k$ of characteristic $0$. We use the following notations:
\begin{tabbing}
\= aaaaaaaaaaaaaaaaaaaaaa\= bbbbbbbbbbbbbbbbbbbbbbbbbbbbbbb \kill
\> $\sim$    \> linear equivalence\\
\> $\sim_{\mathbb{Q}}$  \> $\mathbb{Q}$-linear equivalence\\
\> $\equiv$  \> numerical equivalence\\
\> $|A| \lsgeq |B|$ or \> \\
\> equivalently $|B| \lsleq |A|$ \> $|A| \supseteq |B|+$ fixed effective divisors.
\end{tabbing}

\subsection{Convention}

\

For an arbitrary linear system $|D|$ of positive dimension on a normal projective variety $Z$, we may define {\it a generic irreducible element of $|D|$} in the following way. We have $|D|= \text{Mov}|D|+\text{Fix}|D|$, where $\text{Mov}|D|$ and $\text{Fix}|D|$ denote the moving part and the fixed part of $|D|$ respectively. Consider the rational map $\varphi_{|D|}=\varphi_{\text{Mov}|D|}$. We say that {\it $|D|$ is composed of a pencil} if $\dim \overline{\varphi_{|D|}(Z)}=1$; otherwise, {\it $|D|$ is not composed of a pencil}. {\it A generic irreducible element of $|D|$} is defined to be an irreducible component of a general member in $\text{Mov}|D|$ if $|D|$ is composed of a pencil or,  otherwise,  a general member of $\text{Mov}|D|$.

Keep the above settings. We say that $|D|$ can {\it distinguish different generic irreducible elements $X_1$ and $X_2$ of a linear system $|M|$} if neither $X_1$ nor $X_2$ is contained in $\text{Bs}|D|$,  and if $\overline{\varphi_{|D|}(X_1)} \nsubseteq \overline{\varphi_{|D|}(X_2)}, \overline{\varphi_{|D|}(X_2)} \nsubseteq \overline{\varphi_{|D|}(X_1)}$.

\subsection{Set up for the pluricanonical map $\varphi_{m_0,Y}$}\label{setup}
\

Let $V$ be a nonsingular projective 4-fold of general type. By the minimal model program (see, for instance \cite{BCHM, KMM,  K-M,  Siu}), one can always pick a minimal model $Y$ of $V$ with at worst $\mathbb{Q}$-factorial terminal singularities. As the plurigenus, the canonical volume and the behavior of the pluricanonical map are all birationally invariant in the category of normal varieties with canonical singularities, we may just study on $Y$ instead.

Denote by $K_Y$ the canonical divisor of $Y$. Let $m_0$ be a positive integer such that $P_{m_0}(Y)$  $= h^0(Y, \mathcal{O}_Y(m_0K_Y))\geq 2$.  Fix an effective divisor $K_{m_0} \sim m_0K_Y$. Guaranteed by Hironaka's theorem, we may take a series of blow-ups $\pi: Y' \longrightarrow Y$ such that:

(i) $Y'$ is nonsingular and projective;

(ii) the moving part of $|m_0K_{Y'}|$ is base point free so that $$g_{m_0}=\varphi_{m_0,Y}\circ\pi:Y' \longrightarrow \overline{\varphi_{m_0,Y}(Y)} \subseteq{\mathbb{P}^{P_{m_0}(Y)-1}}$$ is a non-trivial morphism;

(iii) the support of the union of $\pi^*(K_{m_0})$ and all those exceptional divisors of $\pi$ is of simple normal crossings.

Taking the Stein factorization of $g_{m_0}$, we get
$Y' \xlongrightarrow {f_{m_0}} \Gamma \xlongrightarrow s \overline{\varphi_{m_0,Y}(Y)}$ and the following commutative diagram:
\smallskip

\begin{picture}(50,80) \put(100,0){$Y$} \put(100,60){$Y'$}
\put(180,0){$\overline{\varphi_{m_0,Y}(Y)}$.} \put(200,60){$\Gamma$}
\put(115,65){\vector(1,0){80}}
\put(106,55){\vector(0,-1){41}} \put(203,55){\vector(0,-1){43}}
\put(114,58){\vector(2,-1){80}} \multiput(112,2.6)(5,0){13}{-}
\put(172,5){\vector(1,0){4}}
\put(145,70){$f_{m_0}$}
\put(208,30){$s$}
\put(92,30){$\pi$}
\put(132,-6){$\varphi_{m_0,Y}$}
\put(148,43){$g_{m_0}$}
\end{picture}
\bigskip

We may write
\begin{align}\label{eq: adjunction Y}
K_{Y'}=\pi^*(K_Y)+E_{\pi},
\end{align}
 where $E_{\pi}$ is a sum of distinct exceptional divisors with positive rational coefficients. Denote by $|M_m|$ the moving part of $|mK_{Y'}|$ for any positive integer $m$. We may write
 \begin{align*}
 m_0\pi^*(K_Y) \sim_{\mathbb{Q}} M_{m_0}+E_{m_0},
 \end{align*}
 where $E_{m_0}$ is an effective $\mathbb{Q}$-divisor.

If $\dim(\Gamma)=1$, we have $M_{m_0}\sim \sum\limits_{i=1}^{b} F_i\equiv bF$, where $F_i$ and $F$ are general fibers of $f_{m_0}$ and $b=\deg {f_{m_0}}_*\mathcal{O}_{Y'}(M_{m_0})\geq P_{m_0}(Y)-1$. More specifically, when $g(\Gamma)=0$, we say that $|M_{m_0}|$ {\it is composed of a rational pencil} and when $g(\Gamma)>0$, we say that $|M_{m_0}|$ {\it is composed of an irrational pencil}.

If $\dim(\Gamma)>1$,  by Bertini's theorem, we know that a general member of $|M_{m_0}|$ is nonsingular and irreducible.

Denote by $T'$ a generic irreducible element of $|M_{m_0}|$.  Set
$$\theta_{m_0}=\theta_{m_0, |M_{m_0}|}=
\begin{cases}
b, & \text{if}\ \dim(\Gamma)=1;\\
1, &\text{if}\ \dim(\Gamma)\geq 2.
\end{cases}$$
So we naturally get
\begin{align}\label{eq: mov mK}
m_0\pi^*(K_Y) \equiv \theta_{m_0}T'+E_{m_0}.
\end{align}

\subsection{Fixed notation}\label{notations}

\

Pick a generic irreducible element $T'$ of $|M_{m_0}|$. Let $t_1$ be a positive integer such that $P_{t_1}(T') \geq 2$. Let $\pi_T: T' \dashrightarrow T$ be the contraction map onto its minimal model $T$. Fix an effective divisor $K_{t_1} \sim t_1K_T$. Let $\nu: T'' \longrightarrow T'$ be the birational modification of $T'$ such that when $\vartheta:=\pi_T \circ \nu$, $\vartheta^*(K_{t_1}) \cup \{$Exceptional divisors of $\vartheta \}$ has simple normal crossing supports, Mov$|t_1K_{T''}|$ is base point free and $T$ is also a minimal model of $T''$. We may take blow-ups $\eta: Y'' \longrightarrow Y$ of $Y$ such that $Y'' \longrightarrow \Gamma$ is a morphism and $T''$ is a generic irreducible element of Mov$|m_0K_{Y''}|$. So we may work on $Y''$ instead. Without loss of generality, we may and do assume from the very beginning that $Y'$ (and $T'$ respectively) satisfies all the properties of $Y''$ (and $T''$ respectively) and $\pi_T = \vartheta$.

Set $|N|=\text{Mov}|t_1K_{T'}|$ and let $\varphi_{t_1, T}$ be the $t_1$-canonical map: $T \dashrightarrow \mathbb{P}^{P_{t_1}(T)-1}$.
Similar to the 4-fold case, take the Stein factorization of the composition: $$\varphi_{t_1, T} \circ \pi_T : T' \xlongrightarrow j \Gamma' \longrightarrow \overline{\varphi_{t_1, T}(T)}.$$ Denote by $j$ the induced projective morphism with connected fibers from $\varphi_{t_1, T} \circ \pi_T$ by Stein factorization. 

Set
$$a_{t_1,T}=
\begin{cases}
c, & \text{if $\dim(\Gamma')=1$}\ ;\\
1, &\text{if $\dim(\Gamma') \geq 2$}\ ,
\end{cases}$$
where $c=\deg{j_*\mathcal{O}_{T'}(N)}\geq P_{t_1}(T)-1$.
Let $S$ be a generic irreducible element of $|N|$. Then $S$ is nonsingular and we have
\begin{align}\label{eq: movable mK_T}
t_1\pi_T^*(K_T) \equiv a_{t_1,T}S+E_{N},
\end{align}
where $E_{N}$ is an effective $\mathbb{Q}$-divisor. Denote by $\sigma: S \longrightarrow S_0$ the contraction morphism of $S$ onto its minimal model $S_0$.

For a projective variety $X$ of general type with at worst $\mathbb{Q}$-factorial terminal singularities, define the {\it pluricanonical section index of X} to be
$$\delta(X):= min\{m | m\in \mathbb{Z}_{>0}, P_m(X) \geq 2\},$$
which is a birational invariant.

\subsection{Technical preparation}\label{tech pre}

\

We will use the following theorem which is a special form of Kawamata's extension theorem (see \cite[Theorem A]{KawaE}).

\begin{thm}(cf. \cite[Theorem 2.2]{CHP}) \label{KaE}
Let $Z$ be a nonsingular projective variety on which $D$ is a smooth divisor.  Assume that $K_Z+D \sim_{\mathbb{Q}} A+B$ where $A$ is an ample $\bQ$-divisor and $B$ is an effective $\bQ$-divisor such that $D\not\subseteq \textrm{Supp}(B)$.
Then the natural homomorphism
$$H^0(Z, m(K_Z+D))\longrightarrow H^0(D, mK_D)$$
is surjective for any integer $m>1$.
\end{thm}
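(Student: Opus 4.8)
The plan is to derive the statement from Kawamata's extension theorem \cite[Theorem A]{KawaE}, of which it is essentially a packaged form (equivalently, one may simply quote \cite[Theorem 2.2]{CHP}). First I would identify the map: since $Z$ is nonsingular and $D$ is a smooth divisor, adjunction gives an isomorphism of line bundles $\omega_Z(D)\big|_D\cong\omega_D$, so $m(K_Z+D)\big|_D\sim mK_D$ and the homomorphism in question is the restriction map occurring in the cohomology sequence of
\begin{equation*}
0\lrw\OO_Z\!\bigl(m(K_Z+D)-D\bigr)\lrw\OO_Z\!\bigl(m(K_Z+D)\bigr)\lrw\OO_D\!\bigl(mK_D\bigr)\lrw 0 .
\end{equation*}
Since $m(K_Z+D)-D=K_Z+(m-1)(K_Z+D)$, surjectivity would follow once $H^1\!\bigl(Z,\OO_Z(K_Z+(m-1)(K_Z+D))\bigr)=0$. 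If $B=0$ this is immediate from Kawamata--Viehweg vanishing, because for $m>1$ the divisor $(m-1)(K_Z+D)\simQ(m-1)A$ is ample; this already shows why the hypothesis $m>1$ is essential (for $m=1$ the obstruction is $H^1(Z,\omega_Z)$ and the conclusion genuinely fails, e.g.\ for $Z$ an abelian surface and $D$ an ample curve, where a dimension count shows the restriction cannot be onto).

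In general $B\ne 0$ and $(m-1)(K_Z+D)$ is only pseudoeffective, so bare Kawamata--Viehweg does not suffice; this is exactly where Kawamata's theorem is needed. I would record the clean intermediate step: Nadel vanishing applied to the pair $\bigl(Z,(m-1)B\bigr)$ gives
\begin{equation*}
H^1\!\bigl(Z,\OO_Z(K_Z+(m-1)(K_Z+D))\otimes\mathcal J((m-1)B)\bigr)=0
\end{equation*}
because $(m-1)(K_Z+D)-(m-1)B\simQ(m-1)A$ is ample; hence, when the non-klt locus of $\bigl(Z,(m-1)B\bigr)$ happens to be disjoint from $D$, a short diagram chase with the exact sequence above twisted by $\mathcal J((m-1)B)$ already yields the surjectivity. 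The remaining (and genuinely hard) case is when that non-klt locus meets $D$ --- which can occur even though $D\not\subseteq\mathrm{Supp}(B)$ --- and handling it is precisely the content of Kawamata's theorem. To invoke \cite[Theorem A]{KawaE} I would check that the data fit its hypotheses: for $m>1$ one has
\begin{equation*}
m(K_Z+D)\simQ K_Z+D+(m-1)A+(m-1)B ,
\end{equation*}
with $(m-1)A$ an ample $\bQ$-divisor, $(m-1)B$ effective, and $D\not\subseteq\mathrm{Supp}\bigl((m-1)B\bigr)$; if the precise formulation there asks for a normal crossing setup or strict inequalities, one first passes to a log resolution of $(Z,D+B)$ that is an isomorphism over the generic point of $D$ (so that $D$ and the restriction map are unchanged) and uses the ampleness of $A$ to absorb a small perturbation of the decomposition. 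Kawamata's theorem then delivers the conclusion.

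The main obstacle is therefore Kawamata's extension theorem itself, which I would not reprove. If one wanted to prove it from scratch, the route would be the classical one: extend the sections of $mK_D$ lying in the appropriate multiplier ideal via the Nadel vanishing above, compare $\mathcal J((m-1)B)$ with its restriction to $D$ by the restriction theorem for multiplier ideals, and then remove the multiplier-ideal restriction by a Siu-type concentration/self-improvement argument --- feeding the section to be extended back into the boundary while using the ample summand $(m-1)A$ to absorb the error and iterating --- or, alternatively, by the Ohsawa--Takegoshi $L^2$-extension theorem with a singular metric built from $A$ and $B$. For the purposes of this paper none of that is needed: the adjunction identification together with the hypothesis check reduces everything to a citation of \cite[Theorem A]{KawaE} (equivalently \cite[Theorem 2.2]{CHP}).
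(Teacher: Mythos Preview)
Your proposal is correct and aligned with the paper: the paper gives no proof at all for this theorem, merely stating it as a special form of Kawamata's extension theorem with a citation to \cite[Theorem A]{KawaE} and \cite[Theorem 2.2]{CHP}. Your write-up goes beyond what the paper does by spelling out the adjunction identification, the role of the hypothesis $m>1$, and the verification that the data fit Kawamata's hypotheses, but the underlying approach---reduce to a citation of \cite{KawaE}---is exactly the same.
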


In particular, when $Z$ is of general type and $D$,  as a generic irreducible element, moves in a base point free linear system, the conditions of Theorem \ref{KaE} are automatically satisfied. Keep the settings as in \ref{setup} and \ref{notations}. Take $Z=Y'$ and $D=T'$.

If $|M_{m_0}|$ (resp. $\mathrm{Mov}|t_1K_{T'}|$) is composed of an irrational pencil, by \cite[Lemma 2.5]{Chen10}, we have
\begin{align}\label{eq: restr T}
\pi^*(K_Y)|_{T'}=\pi_{T}^*(K_T)
\end{align}
(resp.
\begin{align}\label{eq: restr S}
\pi_T^*(K_T)|_S=\sigma^*(K_{S_0}).
\end{align}
).

If $|M_{m_0}|$ is not composed of an irrational pencil, we have $M_{m_0}\ge \theta_{m_0} T'$. For a sufficiently large and divisible integer $n>0$, we have
$$|n(\frac{m_0}{\theta_{m_0}}+1)K_{Y'}| \lsgeq |n(K_{Y'}+T')|$$
and the natural restriction map
$$H^0(Y', n(K_{Y'}+T')) \longrightarrow H^0(T', n K_{T'})$$
is surjective. By \cite[Theorem 3.3]{K-M}, $|nK_T|$ is base point free, which implies that $\textrm{Mov}|n K_{T'}|=|n\pi_T^*(K_T)|$.
We deduce that
$$n(\frac{m_0}{\theta_{m_0}}+1)\pi^*(K_Y)|_{T'} \geq M_{n(\frac{m_0}{\theta_{m_0}}+1)}|_{T'} \geq  n\pi_T^*(K_T).$$
Together with \eqref{eq: restr T}, we deduce the {\it canonical restriction inequality}:
\begin{equation}
\pi^*(K_Y)|_{T'}   \geq \frac{\theta_{m_0}}{m_0+\theta_{m_0}}\pi_T^*(K_T).\label{cri}
\end{equation}
Similarly, one has
\begin{equation}
\pi_T^*(K_T)|_S \geq \frac{a_{t_1,T}}{t_1+a_{t_1,T}}\sigma^*(K_{S_0}). \label{cri1}
\end{equation}

We will tacitly use the birationality principle of the following type.

\begin{thm} (cf. \cite[2.7]{EXP2}) \label{bir. prin.}
Let $Z$ be a nonsingular projective variety, $A$ and $B$ be two divisors on $Z$ with $|A|$ being a base point free linear system. Take the Stein factorization of $\varphi_{|A|}: Z \overset{h} \longrightarrow W \longrightarrow \bP^{h^0(Z,A)-1}$, where $h$ is a fibration onto a normal variety $W$. Then the rational map $\varphi_{|B+A|}$ is birational onto its image if one of the following conditions is satisfied:
\begin{itemize}
\item [(i)] $\dim \varphi_{|A|}(Z)\geq 2$, $|B|\neq \emptyset$ and $\varphi_{|B+A|}|_D$ is birational for a
general member $D$ of $|A|$.

\item [(ii)] $\dim \varphi_{|A|}(Z)=1$, $\varphi_{|B+A|}$ can distinguish different general fibers of $h$ and $\varphi_{|B+A|}|_F$ is birational for a general fiber $F$ of $h$.
\end{itemize}
\end{thm}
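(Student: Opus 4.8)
The plan is to apply the classical criterion: $\varphi_{|B+A|}$ is birational onto its image provided it separates two sufficiently general points $P_1,P_2\in Z$, equivalently provided a general point $P_1$ is alone in its fibre. The device common to both cases is the inclusion of linear systems $B_0+|A|\subseteq|B+A|$ obtained by fixing $B_0\in|B|$ (in (ii) one likewise assumes $|B|\neq\emptyset$, or argues directly); this produces a linear projection $\rho$ with $\varphi_{|A|}=\rho\circ\varphi_{|B+A|}$, so that the fibres of $\varphi_{|B+A|}$ refine those of $\varphi_{|A|}$, hence of $h$. In particular, a collision $\varphi_{|B+A|}(P_1)=\varphi_{|B+A|}(P_2)$ forces $P_1,P_2$ into one fibre of $\varphi_{|A|}$. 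Moreover, since $|A|$ is base point free, every member is $\varphi_{|A|}^{*}H$ for a hyperplane $H$, so every $D\in|A|$ through $P_1$ contains the whole fibre $G:=\varphi_{|A|}^{-1}(\varphi_{|A|}(P_1))$, in particular $P_2$.

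For case (i) I would fix a general member $D\in|A|$ (smooth and irreducible by Bertini) lying in the dense open locus of $|A|$ on which $\varphi_{|B+A|}|_D$ is birational onto its image, and choose $P_1$ to be a general point of $D$; then $P_1$ is a general point of $Z$ and lies off the (proper closed) non-injective locus of $\varphi_{|B+A|}|_D$. If $P_2\neq P_1$ satisfies $\varphi_{|B+A|}(P_2)=\varphi_{|B+A|}(P_1)$, then $P_2\in G\subseteq D$, so $P_2$ lies in the same fibre of $\varphi_{|B+A|}|_D$ as $P_1$; genericity of $P_1$ then forces $P_2=P_1$. (For two independently general $P_1,P_2$ — which lie in distinct fibres of $\varphi_{|A|}$ — one instead picks $D\in|A|$ with $P_1\in D\not\ni P_2$, so that $B_0+D\in|B+A|$ passes through $P_1$ but not $P_2$ and separates them directly. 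The hypothesis $\dim\varphi_{|A|}(Z)\ge 2$ enters only to ensure that the subsystem of members through $P_1$ is positive-dimensional, so that a general such member is still ``general'' in $|A|$.)

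For case (ii), $h\colon Z\to W$ is a fibration over a smooth curve with irreducible general fibre $F$. I would consider the combined rational map $\Phi=(h,\varphi_{|B+A|})\colon Z\dashrightarrow W\times\overline{\varphi_{|B+A|}(Z)}$. Over a general $w\in W$, the hypothesis that $\varphi_{|B+A|}|_{F_w}$ is birational onto its image shows $\Phi|_{F_w}$ is birational onto $\{w\}\times\overline{\varphi_{|B+A|}(F_w)}$; as the slices are distinct for distinct $w$, $\Phi$ is birational onto its image $\Gamma$. It then suffices to see the projection $\mathrm{pr}_2\colon\Gamma\to\overline{\varphi_{|B+A|}(Z)}$ is generically injective, for then $\varphi_{|B+A|}=\mathrm{pr}_2\circ\Phi$ is birational. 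If it were not, a general point $q=\varphi_{|B+A|}(P_1)$ would have a second preimage $P_2$; the sub-case $h(P_1)=h(P_2)$ is excluded as in (i) (using $\varphi_{|B+A|}|_F$ birational together with $G\subseteq F_{h(P_1)}$ there), so $w_1:=h(P_1)\neq h(P_2)=:w_2$ and $q\in\overline{\varphi_{|B+A|}(F_{w_1})}\cap\overline{\varphi_{|B+A|}(F_{w_2})}$. Here the distinguishing hypothesis is invoked: passing to the correspondence $\Sigma\subseteq W\times W$ of colliding fibre pairs $(w_1,w_2)$ and examining its projections, one shows $\Sigma$ cannot dominate either factor — a fixed $\overline{\varphi_{|B+A|}(F_{w_2})}$ contains only finitely many subvarieties of its own dimension, whereas the $\overline{\varphi_{|B+A|}(F_w)}$ are pairwise distinct for general $w$ — contradicting that $\varphi_{|B+A|}$ distinguishes different general fibres.

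I expect the main obstacle to be exactly this last step of case (ii): converting ``distinguishes general \emph{pairs} of fibres'' into honest generic injectivity of $\mathrm{pr}_2$, since the colliding partner $F_{w_2}$ need not a priori be a general fibre, so the correspondence $\Sigma$ must be analysed with care. The remaining ingredients — the reduction via $B_0+|A|\subseteq|B+A|$, the passage ``general point of a general member $\Rightarrow$ general point of $Z$'', and the description of the non-injective locus of a generically injective restriction — are standard general-position arguments that I would only sketch.
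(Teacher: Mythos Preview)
The paper does not prove this theorem: it is quoted verbatim from \cite[2.7]{EXP2} and invoked as a black box, so there is no ``paper's own proof'' to compare against. Your outline is the standard argument for this birationality principle and is essentially correct; in particular your identification of the only genuine subtlety --- that in case~(ii) the colliding partner fibre $F_{w_2}$ need not be general, so the distinguishing hypothesis (which concerns only general \emph{pairs}) must be upgraded via a correspondence/dimension argument --- is accurate and is exactly what a full proof has to address. One small remark: in case~(ii) the statement does not assume $|B|\neq\emptyset$, and your argument does not actually need the factorisation $\varphi_{|A|}=\rho\circ\varphi_{|B+A|}$ there, since the distinguishing hypothesis on $\varphi_{|B+A|}$ already handles the cross-fibre collisions directly; you can drop the parenthetical about assuming $|B|\neq\emptyset$ in~(ii).
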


\subsection{Some useful lemmas}\label{known results}

\

The following results on surfaces and 3-folds will be used in our proof.

\begin{lem}\label{lem: adjoint linear system surface}
Let $S$ be a smooth surface of general type. Denote by $\sigma: S\to S_0$ the contraction morphism onto its minimal model. Let $Q$ be a nef and big $\mathbb{Q}$-divisor on $S$. Then $\varphi_{|K_S+3\sigma^*(K_{S_0})+\roundup{Q}|}$ is birational.
\end{lem}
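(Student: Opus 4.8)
The plan is to prove the birationality of $\varphi_{|K_S+3\sigma^*(K_{S_0})+\roundup{Q}|}$ by a standard separation-of-points argument on the surface $S$, reducing everything to the minimal model $S_0$ where effective Riemann--Roch and Reider-type results are available. First I would set $L = K_S + 3\sigma^*(K_{S_0}) + \roundup{Q}$ and observe that, since $\roundup{Q} \geq Q$ with $Q$ nef and big and $\roundup{Q}-Q$ effective, we have $L \sim_{\mathbb{Q}} \sigma^*(K_{S_0}) + 3\sigma^*(K_{S_0}) + (\roundup{Q}-Q) + Q + E_\sigma$ using $K_S = \sigma^*(K_{S_0}) + E_\sigma$ with $E_\sigma$ effective and $\sigma$-exceptional; hence it suffices to control the corresponding linear system on $S_0$ after pushing forward, or equivalently to work with $K_S + 3\sigma^*(K_{S_0}) + \roundup{Q}$ directly and use that $\sigma^*$ of anything nef on $S_0$ is nef on $S$.

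The key steps, in order: (1) Reduce to separating two general points $p, q \in S$ (and separating infinitely near points, i.e.\ generating $1$-jets) lying outside the exceptional locus of $\sigma$ and outside any fixed locus. (2) Apply a Reider-type / vanishing argument: write $L - K_S = 3\sigma^*(K_{S_0}) + \roundup{Q}$, which I want to show is nef and big with the numerical positivity needed to separate two points, namely $(L-K_S)^2 > 4$ and $(L-K_S)\cdot C \geq 2$ for relevant curves $C$. Since $\sigma^*(K_{S_0})$ is nef and big and $\roundup{Q} \geq Q$ is big, bigness is automatic; for the intersection-number bounds I would use $K_{S_0}^2 \geq 1$ for a minimal surface of general type (so $(3\sigma^*(K_{S_0}))^2 \geq 9 > 4$) together with $K_{S_0}\cdot C \geq 1$ for any curve not contracted by $\sigma$, and handle $\sigma$-contracted curves separately since on those $\roundup{Q}$ contributes and the points were chosen generic. (3) Invoke Kawamata--Viehweg vanishing $H^1(S, K_S + (L-K_S) - Z) = 0$ for the appropriate zero-dimensional (or one-dimensional) subscheme $Z$ at $p, q$ to get surjectivity of $H^0(L) \to H^0(L\otimes \mathcal{O}_Z)$, which yields separation of points and tangents, hence birationality.

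The main obstacle I expect is the treatment of the $\sigma$-exceptional curves and, more generally, making the Reider-type numerical hypotheses genuinely hold near arbitrary (not merely generic) points so that the map is birational everywhere it needs to be, not just generically. The divisor $3\sigma^*(K_{S_0})$ is only nef (not ample) and has degree zero on every $(-2)$-curve contracted by $\sigma$, so the positivity must come entirely from $\roundup{Q}$ along those curves; one must check $\roundup{Q}\cdot C$ is large enough, which may require an argument that $Q$, being big, cannot be numerically trivial on all of them, or a perturbation $3\sigma^*(K_{S_0}) + \roundup{Q} \sim_{\mathbb{Q}} $ (ample) $+$ (effective) followed by Kawamata--Viehweg with a carefully chosen boundary. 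An alternative, probably cleaner, route is to cite a known birationality criterion of the form $\varphi_{|K_S + D|}$ birational when $D$ is nef and big with $D^2 > $ (explicit bound) and $D\cdot C \geq$ (bound) — e.g.\ the surface case machinery already used elsewhere in the Chen--Chen papers the author relies on — thereby absorbing the Reider analysis into a black box; I would structure the proof to invoke such a statement after verifying $D = 3\sigma^*(K_{S_0}) + \roundup{Q}$ meets its hypotheses, with the only real work being the intersection estimates above.
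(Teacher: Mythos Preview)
Your overall strategy---verify numerical positivity of $D=3\sigma^*(K_{S_0})+Q$ and feed it into a Reider-type criterion for $|K_S+\roundup{D}|$---is exactly what the paper does, but your intersection estimate is too weak and the paper repairs this with an idea you did not anticipate. You rely on $\sigma^*(K_{S_0})\cdot C\ge 1$ for curves $C$ through a general point, which gives only $(3\sigma^*(K_{S_0})+Q)\cdot C\ge 3$; this does not meet the hypotheses of the criteria you would want to cite (the paper invokes \cite[Lemma~2.5]{C14}, which requires $D^2>8$ together with a curve bound that $3$ does not reach), and a direct Reider analysis with $D\cdot E=3$ leaves the obstruction $E^2=1$, $K_{S_0}\cdot E=1$ unexcluded.

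The paper's fix is a case split on $p_g(S)$ that you missed. When $p_g(S)>0$ the statement is already \cite[Theorem~3.2(2)]{Chen10}, and no numerics are needed. When $p_g(S)=0$ one has the sharper bound $\sigma^*(K_{S_0})\cdot C\ge 2$ for every irreducible curve $C$ through a very general point (\cite[Lemma~2.5]{EXP3}); this is specific to surfaces with $p_g=0$. With it, $(3\sigma^*(K_{S_0})+Q)\cdot C>6$ and $(3\sigma^*(K_{S_0})+Q)^2>9K_{S_0}^2\ge 9>8$, so the criterion applies cleanly. One minor slip in your write-up: the divisor you test for nef-and-big should be $3\sigma^*(K_{S_0})+Q$, not $3\sigma^*(K_{S_0})+\roundup{Q}$, since $\roundup{Q}$ need not be nef; the relevant criteria are stated for $|K_S+\roundup{D}|$ with $D$ a nef and big $\bQ$-divisor, and $\roundup{3\sigma^*(K_{S_0})+Q}=3\sigma^*(K_{S_0})+\roundup{Q}$ because the first summand is integral.
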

\begin{proof}
If $p_g(S)>0$, $\varphi_{|K_S+3\sigma^*(K_{S_0})+\roundup{Q}|}$ is birational by \cite[Theorem 3.2(2)]{Chen10}.

If $p_g(S)=0$, by \cite[Lemma 2.5]{EXP3}, we have $(\sigma^*(K_{S_0})\cdot C)\ge 2$ for any irreducible curve $C$ passing through a very general point of $S$.  So $((3\sigma^*(K_{S_0})+Q)\cdot C)> 6$. Note that $(3\sigma^*(K_{S_0})+Q)^2>9K_{S_0}^2>8$. By \cite[Lemma 2.5]{C14}, $\varphi_{|K_S+3\sigma^*(K_{S_0})+\roundup{Q}|}$ is birational.
	
\end{proof}

\begin{lem}(\cite[Lemma 2.14]{EXP2})\label{eff. for surf.}
Let $S$ be a nonsingular projective surface of general type. Denote by $\sigma: S \longrightarrow S_0$ the blow-down onto its minimal model $S_0$. Let $Q$ be a $\mathbb{Q}$-divisor on $S$. Then $h^0(S, K_S+\roundup{Q}) \geq 2$ under one of the following conditions:
\begin{itemize}
\item[(1)] $p_g(S)>0, Q \equiv \sigma^*(K_{S_0})+Q_1$ for some nef and big $\mathbb{Q}$-divisor $Q_1$ on $S$;
\item[(2)] $p_g(S)=0, Q \equiv 2\sigma^*(K_{S_0})+Q_2$ for some nef and big $\mathbb{Q}$-divisor $Q_2$ on $S$.
\end{itemize}
\end{lem}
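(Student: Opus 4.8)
The plan is to compute $h^0(S,K_S+\roundup{Q})$ by Riemann--Roch after killing the higher cohomology. First I observe that in both cases $Q$ is nef and big: $\sigma^*(K_{S_0})$ is nef, being the pull-back of the nef and big divisor $K_{S_0}$, so $Q$, being numerically a sum of $\sigma^*(K_{S_0})$ (resp. $2\sigma^*(K_{S_0})$) and the nef and big divisor $Q_1$ (resp. $Q_2$), is nef and big. Consequently, by the Kawamata--Viehweg vanishing theorem, $H^1(S,\mathcal{O}_S(K_S+\roundup{Q}))=0$, and by Serre duality together with the bigness of $\roundup{Q}$, $H^2(S,\mathcal{O}_S(K_S+\roundup{Q}))=H^0(S,\mathcal{O}_S(-\roundup{Q}))^{\vee}=0$. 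Hence Riemann--Roch on the surface $S$ gives
$$h^0(S,K_S+\roundup{Q})=\chi(\mathcal{O}_S)+\frac12\,\roundup{Q}\cdot(K_S+\roundup{Q}).$$

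Since $S$ is of general type I may use the standard inequality $\chi(\mathcal{O}_S)=\chi(\mathcal{O}_{S_0})\ge 1$, so it remains to bound the intersection term $\roundup{Q}\cdot(K_S+\roundup{Q})$ from below by $2$. This is where the $\sigma^*(K_{S_0})$-summand enters. Writing $K_S=\sigma^*(K_{S_0})+E_\sigma$ with $E_\sigma\ge 0$ exceptional, so that $\sigma^*(K_{S_0})\cdot E_\sigma=0$, and using that $\sigma^*(K_{S_0})$ is nef with $(\sigma^*(K_{S_0}))^2=K_{S_0}^2\ge 1$, a direct expansion gives $Q\cdot(K_S+Q)\ge 2K_{S_0}^2\ge 2$ in case (1), while in case (2) the extra copy of $\sigma^*(K_{S_0})$ improves this to $Q\cdot(K_S+Q)\ge 6K_{S_0}^2\ge 6$. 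Moreover, since $\sigma^*(K_{S_0})$ is an \emph{integral} divisor, we have the useful identity $\roundup{Q}=\sigma^*(K_{S_0})+\roundup{Q_1}$ (resp. $2\sigma^*(K_{S_0})+\roundup{Q_2}$), which isolates the safe nef part from the fractional part.

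The step I expect to be the main obstacle is passing from the estimate on $Q\cdot(K_S+Q)$ to the required estimate on $\roundup{Q}\cdot(K_S+\roundup{Q})$, since rounding up can \emph{decrease} this quantity through the negative self-intersections of $\sigma$-contracted curves appearing in the fractional part. In case (2) there is ample slack ($6K_{S_0}^2$ against the required $2$), and a routine bookkeeping of the cross terms with $E_\sigma$, using that the fractional coefficients lie in $[0,1)$, absorbs any loss. In case (1) the margin is too small to argue this way when $K_{S_0}^2=\chi(\mathcal{O}_{S_0})=1$, and there I would instead invoke the hypothesis $p_g(S)>0$ directly: then $\sigma^*(K_{S_0})$ is effective with $h^0\ge 1$, so $h^0(S,K_S+\roundup{Q})\ge h^0(S,K_S+\roundup{Q_1})$, and one combines this with $P_2(S_0)=\chi(\mathcal{O}_{S_0})+K_{S_0}^2\ge 2$ (itself Riemann--Roch plus vanishing on the minimal surface $S_0$). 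Alternatively, one can bypass Riemann--Roch entirely by an adjunction argument: choose a curve $C$, a generic irreducible element of a suitable moving subsystem built from $\sigma^*(K_{S_0})$ and the big part of $Q$, with $\roundup{Q}-C$ still nef and big, so that $H^1(S,K_S+\roundup{Q}-C)=0$ yields $h^0(S,K_S+\roundup{Q})=h^0(S,K_S+\roundup{Q}-C)+h^0(C,(K_S+\roundup{Q})|_C)$, and then bound $h^0$ on $C$ by Riemann--Roch on the curve (using $g(C)\ge 2$); the delicate point in this route is the case $p_g(S)=1$, where $|\sigma^*(K_{S_0})|$ need not move and $C$ must be produced from the big part instead.
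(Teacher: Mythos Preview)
The paper does not supply its own proof of this lemma; it is quoted verbatim from \cite[Lemma~2.14]{EXP2} and used as a black box. So there is nothing in the present paper to compare your attempt against.

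That said, your proposal is explicitly a plan rather than a proof, and you yourself flag the gap: you do not carry out the passage from $Q\cdot(K_S+Q)$ to $\roundup{Q}\cdot(K_S+\roundup{Q})$, and in case~(1) with $K_{S_0}^2=\chi(\OO_{S_0})=1$ you only gesture at two possible workarounds without completing either. Beyond that, there is a genuine slip: you write ``since $\sigma^*(K_{S_0})$ is an integral divisor, we have $\roundup{Q}=\sigma^*(K_{S_0})+\roundup{Q_1}$'', but the hypothesis is only $Q\equiv\sigma^*(K_{S_0})+Q_1$ (numerical equivalence), not equality of divisors. Rounding up is not invariant under numerical equivalence, so this identity is unjustified, and the same issue undermines your fallback ``$h^0(S,K_S+\roundup{Q})\ge h^0(S,K_S+\roundup{Q_1})$'' in case~(1). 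A second point you pass over is that Kawamata--Viehweg vanishing for $K_S+\roundup{Q}$ requires the fractional part of $Q$ to have simple normal crossing support, which is not part of the stated hypotheses; in practice (and in \cite{EXP2}) one arranges this by a further modification, but it has to be said and checked that the conclusion survives.
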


The following lemma is just a slight modification of \cite[Proposition 3.3]{Chen10}.

\begin{lem}\label{lem: nonvanishing adjoint linear system on 3fold}
Let $T$ be a minimal 3-fold of general type with $P_{t_1}(T)\ge 2$. Take a birational modification $\pi_T: T'\to T$ as in \ref{notations}. Keep the same notation and setting as in \ref{notations}. Let $S$ be a generic irreducible element of $|N|=\mathrm{Mov}|t_1K_{T'}|$, which is base point free by our assumption.  Suppose that $Q_\lambda\equiv\lambda \pi_T^*(K_T)$ is a nef $\mathbb{Q}$-divisor on $T'$. If $\lambda> 3t_1+2$, we have $h^0(T', K_{T'}+\roundup{Q_\lambda})\ge 2$. Moreover, if $p_g(S)>0$, we have $h^0(T', K_{T'}+\roundup{Q_\lambda})\ge 2$ for all $\lambda> 2t_1+1$.
\end{lem}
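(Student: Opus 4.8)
The plan is to deduce the statement from Lemma \ref{eff. for surf.} applied to the generic irreducible surface $S$ of $|N|$, using Kawamata's extension theorem (Theorem \ref{KaE}) to push a section on $S$ back up to $T'$. First I would recall the setup of \ref{notations}: $|N| = \mathrm{Mov}|t_1 K_{T'}|$ is base point free, $S$ is a smooth generic irreducible element, $\sigma: S \to S_0$ contracts $S$ to its minimal model, and we have the numerical relation $t_1 \pi_T^*(K_T) \equiv a_{t_1,T} S + E_N$ with $E_N$ effective, together with the canonical restriction inequality \eqref{cri1}: $\pi_T^*(K_T)|_S \geq \frac{a_{t_1,T}}{t_1 + a_{t_1,T}} \sigma^*(K_{S_0})$.

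The key mechanism: since $S$ moves in the base point free system $|N|$ on the variety $T'$ of general type, the hypotheses of Theorem \ref{KaE} with $Z = T'$, $D = S$ are automatically satisfied, so the restriction map $H^0(T', m(K_{T'}+S)) \to H^0(S, mK_S)$ is surjective for $m \geq 2$; more usefully, one gets a comparison $K_{T'}|_S + (\text{something}) \geq K_S + (\text{effective})$ allowing one to relate $|K_{T'} + \roundup{Q_\lambda}||_S$ to an adjoint system on $S$. Concretely, I would aim to show that the restriction of $K_{T'} + \roundup{Q_\lambda}$ to $S$ dominates $K_S + \roundup{Q'}$ for a suitable nef and big $\mathbb{Q}$-divisor $Q'$ on $S$ of the form required by Lemma \ref{eff. for surf.}: namely $Q' \equiv \sigma^*(K_{S_0}) + (\text{nef big})$ when $p_g(S) > 0$, or $Q' \equiv 2\sigma^*(K_{S_0}) + (\text{nef big})$ when $p_g(S) = 0$. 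The bookkeeping: on $S$ we can produce a copy of $\sigma^*(K_{S_0})$ from each unit of $\frac{t_1+a_{t_1,T}}{a_{t_1,T}} \leq \frac{t_1+1}{1}$ units of $\pi_T^*(K_T)|_S$ via \eqref{cri1} (noting $a_{t_1,T} \geq 1$, and in fact $a_{t_1,T} \geq 1$ always, with the worst ratio being $t_1+1$ when $a_{t_1,T}=1$), plus a leftover nef and big piece; meanwhile the adjunction $K_{T'}|_S = K_S - \text{(effective from the different/exceptional contributions)}$... here I must be careful and instead use the extension-theorem route so that $H^0(T', K_{T'} + \roundup{Q_\lambda}) \twoheadrightarrow H^0(S, K_S + \roundup{Q_\lambda|_S - (\text{adjustment})})$ after absorbing the right correction, then invoke Lemma \ref{eff. for surf.}.

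So the counting is: write $Q_\lambda \equiv \lambda \pi_T^*(K_T)$. Restricting and using \eqref{cri1}, $Q_\lambda|_S \equiv \lambda \pi_T^*(K_T)|_S$, and $\pi_T^*(K_T)|_S \geq \frac{a_{t_1,T}}{t_1+a_{t_1,T}} \sigma^*(K_{S_0}) \geq \frac{1}{t_1+1}\sigma^*(K_{S_0})$. To extract $\sigma^*(K_{S_0})$ (resp. $2\sigma^*(K_{S_0})$) plus a strictly positive nef and big remainder and still have the extension map surjective with the correct rounding, one needs $\lambda$ strictly larger than $2(t_1+1) - 1 = 2t_1+1$ in the $p_g(S) > 0$ case (keeping one extra $\sigma^*(K_{S_0})$-worth for $K_S$ versus the adjunction defect) and larger than $3(t_1+1) - 1 = 3t_1 + 2$ in the $p_g(S) = 0$ case. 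This exactly matches the claimed thresholds $\lambda > 2t_1+1$ and $\lambda > 3t_1+2$. I would present this as: choose $m \gg 0$ divisible, run the Kawamata extension for $K_{T'}+S$ as in the derivation of \eqref{cri1} to get that $\pi_T^*(K_T)|_{T'}$-type inequalities hold as honest divisor inequalities on $S$, then absorb, round up, and apply Lemma \ref{eff. for surf.}; finally lift the resulting nonzero section of $K_S + \roundup{Q'}$ along the surjection coming from Theorem \ref{KaE} (after checking that $\roundup{Q_\lambda}$ restricts to something $\geq K_S + \roundup{Q'}$, which is where the floor/ceiling comparison $\roundup{Q_\lambda}|_S \geq \roundup{Q_\lambda|_S}$ must be used with care about the boundary divisor $S$ not being a component of the relevant effective parts).

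The main obstacle I anticipate is the rounding and adjunction bookkeeping: controlling $\roundup{Q_\lambda}|_S$ versus $\roundup{Q_\lambda|_S - E}$ where $E$ encodes the difference between $K_{T'}|_S$ and $K_S$ (plus the effective pieces $E_N$ and $E_\pi$-type contributions), and making sure the leftover $\mathbb{Q}$-divisor after splitting off copies of $\sigma^*(K_{S_0})$ is genuinely nef and big rather than merely pseudoeffective — this is what forces the strict inequalities $\lambda > 2t_1+1$ and $\lambda > 3t_1+2$ instead of $\geq$. A secondary subtlety is handling the case where $|N|$ is composed of an (irrational) pencil, where one uses \eqref{eq: restr S} giving the cleaner identity $\pi_T^*(K_T)|_S = \sigma^*(K_{S_0})$, which only improves the bounds, so the stated thresholds still suffice. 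Since this lemma is described as "just a slight modification of \cite[Proposition 3.3]{Chen10}", I expect the argument to essentially follow that reference with the constants re-derived to track the two cases $p_g(S) > 0$ and $p_g(S) = 0$ separately.
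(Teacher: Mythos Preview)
Your numerical bookkeeping and the plan to reduce to Lemma \ref{eff. for surf.} on $S$ are exactly right, and the thresholds $\lambda>2t_1+1$ and $\lambda>3t_1+2$ emerge precisely as you describe. The gap is in the lifting mechanism. Theorem \ref{KaE} only gives surjectivity of $H^0(T',m(K_{T'}+S))\to H^0(S,mK_S)$ for integers $m\ge 2$; it says nothing about the system $|K_{T'}+\roundup{Q_\lambda}|$ for an arbitrary nef $\mathbb{Q}$-divisor $Q_\lambda$. Since $K_{T'}+\roundup{Q_\lambda}$ is not a multiple of $K_{T'}+S$, the extension theorem does not produce the surjection $H^0(T',K_{T'}+\roundup{Q_\lambda})\twoheadrightarrow H^0(S,K_S+\roundup{Q'})$ that your argument needs. (In the paper the extension theorem is used only to derive \eqref{cri1}, not to lift sections of this particular adjoint system.)

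The paper uses Kawamata--Viehweg vanishing instead. After subtracting $\tfrac{1}{a_{t_1,T}}E_N+S$ from $Q_\lambda$ one obtains a $\mathbb{Q}$-divisor numerically equivalent to $(\lambda-\tfrac{t_1}{a_{t_1,T}})\pi_T^*(K_T)$, which is nef and big with SNC fractional part (after the standard modification), so $H^1(T',K_{T'}+\roundup{Q_\lambda-\tfrac{1}{a_{t_1,T}}E_N}-S)=0$. Together with adjunction $K_S=(K_{T'}+S)|_S$ this yields
\[
|K_{T'}+\roundup{Q_\lambda}|\big|_S\ \succcurlyeq\ \big|K_S+\roundup{(Q_\lambda-\tfrac{1}{a_{t_1,T}}E_N-S)|_S}\big|,
\]
after which one subtracts the effective error $H$ coming from \eqref{cri1} and applies Lemma \ref{eff. for surf.} exactly as you outlined. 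Note that the subtraction of $\tfrac{1}{a_{t_1,T}}E_N$ \emph{before} rounding is also what resolves the rounding worry you flagged: one never has to compare $\roundup{Q_\lambda}|_S$ with $\roundup{Q_\lambda|_S}$ directly. So the fix is simply to replace the extension-theorem step by a Kawamata--Viehweg vanishing argument; the rest of your sketch survives intact.
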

\begin{proof}
Without loss of generality(by the same argument as in the proof of \cite[Lemma 2.8]{Chen10}),  we may assume that $\mathrm{Supp}(Q_\lambda)\cup \mathrm{Supp}(E_{\pi_T}+E_N)$ has only simple normal crossings, where $\mathrm{Supp}(E_{\pi_T})$ is the support of all $\pi_T$-exceptional divisors. Note that we always have $\lambda>2t_1+1$ by our assumption. By \eqref{eq: movable mK_T},
$$Q_\lambda-\frac{1}{a_{t_1,T}}E_N-S\equiv (\lambda-\frac{t_1}{a_{t_1, T}})\pi_T^*(K_T)$$
is nef and big and it has simple normal crossing support by assumption. By Kawamata-Viehweg vanishing theorem(\cite{KV,VV}), we have
\begin{align}\label{eq: lem nonv rest1}
|K_{T'}+\roundup{Q_\lambda}||_S\lsgeq & |K_{T'}+\roundup{Q_\lambda-\frac{1}{a_{t_1,T}}E_N}||_S\nonumber\\
\lsgeq & |K_S+\roundup{(Q_\lambda-\frac{1}{a_{t_1,T}}E_N-S)|_S}|.
\end{align}
By \eqref{cri1}, we may write $\pi_T^*(K_T)|_S\simQ \frac{a_{t_1,T}}{t_1+a_{t_1,T}}\sigma^*(K_{S_0})+H$ for an effective $\mathbb{Q}$-divisor $H$. Thus we have
	$$
	(Q_\lambda-\frac{1}{a_{t_1,T}}E_N-S)|_S-(\lambda-\frac{t_1}{a_{t_1,T}})H\equiv (\lambda-\frac{t_1}{a_{t_1,T}})\cdot\frac{a_{t_1,T}}{t_1+a_{t_1,T}}\sigma^*(K_{S_0}).
	$$
	Note that $(\lambda-\frac{t_1}{a_{t_1,T}})\cdot\frac{a_{t_1,T}}{t_1+a_{t_1,T}}\ge \frac{\lambda-t_1}{t_1+1}$. If $\lambda>3t_1+2$ (resp. $\lambda>2t_1+1$ and $p_g(S)>0$), we have $(\lambda-\frac{t_1}{a_{t_1,T}})\cdot\frac{a_{t_1,T}}{t_1+a_{t_1,T}}>2$ (resp. $(\lambda-\frac{t_1}{a_{t_1,T}})\cdot\frac{a_{t_1,T}}{t_1+a_{t_1,T}}>1$ and $p_g(S)>0$). By Lemma \ref{eff. for surf.}, if $\lambda>3t_1+2$ (resp. $\lambda>2t_1+1$ and $p_g(S)>0$), we have $h^0(S, K_S+\roundup{(Q_\lambda-\frac{1}{a_{t_1,T}}E_N-S)|_S-(\lambda-\frac{t_1}{a_{t_1,T}})H})\ge 2$, which implies that $h^0(S, \\ K_S+\roundup{(Q_\lambda-\frac{1}{a_{t_1,T}}E_N-S)|_S})\ge 2$ (resp. the same inequality holds when $p_g(S)>0$ and $\lambda>2t_1+1$). We deduce that $h^0(T', K_{T'}+\roundup{Q_\lambda})\ge 2$ by \eqref{eq: lem nonv rest1}.
\end{proof}
\begin{rem}\label{rem: nonvanishing adjoint linear system on 3fold}
	 For any minimal $3$-fold $T$ of general type, by \cite[3.6]{Chen10}, one of the following holds:
	 \begin{itemize}
	 	\item[(a)] $\chi(\mathcal{O}_T)>1$ and $q(T)=0$. In this case,  we have $\delta(T)\le 18$ and $p_g(S)>0$;
	 	\item[(b)] $\delta(T)\le 10$.
	 \end{itemize}
	By Lemma \ref{lem: nonvanishing adjoint linear system on 3fold}, we always have $h^0(T', K_{T'}+\roundup{Q_\lambda})\ge 2$ if $\lambda>37$.
\end{rem}

By adopting the idea in the proof of \cite[Theorem 3.5]{Chen10}, we have the following lemma.
\begin{lem}\label{lem: biration adjoint linear system on 3fold}
	Keep the same notation as in Lemma \ref{lem: nonvanishing adjoint linear system on 3fold}. If $\lambda>4t_1+3$, $\varphi_{|K_{T'}+\roundup{Q_\lambda}|}$ is birational onto its image. In particular, for any $\lambda>75$, $\varphi_{|K_{T'}+\roundup{Q_\lambda}|}$ is birational onto its image.
\end{lem}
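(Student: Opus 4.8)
The plan is to run the standard ``cut by a divisor and apply the surface case'' argument, using Lemma \ref{lem: nonvanishing adjoint linear system on 3fold} to produce the needed non-vanishing on a generic irreducible element $S$ of $|N|$. First I would pass, as in the statement, to the birational model $\pi_T\colon T'\to T$ with $|N|=\mathrm{Mov}|t_1K_{T'}|$ base point free, and I would arrange that $\mathrm{Supp}(Q_\lambda)\cup\mathrm{Supp}(E_{\pi_T}+E_N)$ has simple normal crossings (again by the reduction in the proof of \cite[Lemma 2.8]{Chen10}). Since $\lambda>4t_1+3>2t_1+1$, the $\mathbb{Q}$-divisor $Q_\lambda-\frac{1}{a_{t_1,T}}E_N-S\equiv(\lambda-\frac{t_1}{a_{t_1,T}})\pi_T^*(K_T)$ is nef and big with SNC support, so Kawamata--Viehweg vanishing gives, exactly as in \eqref{eq: lem nonv rest1},
\begin{align*}
|K_{T'}+\roundup{Q_\lambda}|\big|_S\ \lsgeq\ |K_S+\roundup{(Q_\lambda-\tfrac{1}{a_{t_1,T}}E_N-S)|_S}|.
\end{align*}

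Next I would show the restricted system $\varphi_{|K_S+\roundup{(Q_\lambda-\frac{1}{a_{t_1,T}}E_N-S)|_S}|}$ is birational. Using \eqref{cri1}, write $\pi_T^*(K_T)|_S\simQ\frac{a_{t_1,T}}{t_1+a_{t_1,T}}\sigma^*(K_{S_0})+H$ with $H$ effective, so that the relevant restricted divisor is $\equiv(\lambda-\frac{t_1}{a_{t_1,T}})\frac{a_{t_1,T}}{t_1+a_{t_1,T}}\sigma^*(K_{S_0})$ modulo an effective part, and $(\lambda-\frac{t_1}{a_{t_1,T}})\frac{a_{t_1,T}}{t_1+a_{t_1,T}}\ge\frac{\lambda-t_1}{t_1+1}>3$ when $\lambda>4t_1+3$. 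Hence, after absorbing the effective part, the restricted divisor dominates $K_S+3\sigma^*(K_{S_0})+\roundup{Q'}$ for a nef and big $\mathbb{Q}$-divisor $Q'$ on $S$, and Lemma \ref{lem: adjoint linear system surface} gives birationality on $S$.

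Finally I would invoke the birationality principle Theorem \ref{bir. prin.} applied with $A$ the (base point free) moving part $|N|$ on $T'$ and $B$ chosen so that $B+A$ is (a subsystem of) $|K_{T'}+\roundup{Q_\lambda}|$. There are two cases depending on whether $\dim(\Gamma')\ge 2$ or $\dim(\Gamma')=1$, i.e. whether $|N|$ is composed with a pencil. When $\dim(\Gamma')\ge 2$, condition (i) applies directly using the birationality on the generic $S$ just established, provided $|B|\ne\emptyset$, which follows because $K_{T'}+\roundup{Q_\lambda}-S\equiv K_{T'}+\roundup{Q_\lambda-\frac{1}{a_{t_1,T}}E_N-S}+(\text{eff.})$ is effective by Lemma \ref{lem: nonvanishing adjoint linear system on 3fold} together with $\lambda-t_1>3t_1+2$ or $>2t_1+1$ with $p_g(S)>0$ as appropriate. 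When $\dim(\Gamma')=1$, I need the extra input that $\varphi_{|K_{T'}+\roundup{Q_\lambda}|}$ separates distinct general fibers of $j$; this comes from writing $K_{T'}+\roundup{Q_\lambda}\ge K_{T'}+\roundup{Q_{\lambda'}}+(a_{t_1,T}-1)S+(\text{eff.})$ with $a_{t_1,T}\ge P_{t_1}(T)-1$, so that already two fibers appear, combined with the non-vanishing $h^0(T',K_{T'}+\roundup{Q_{\lambda'}})\ge 2$ for $\lambda'$ still above the threshold (using $\lambda>4t_1+3$ leaves room). For the last assertion, when $\lambda>75$ one has $\lambda>4t_1+3$ for $t_1\le 18$, and Remark \ref{rem: nonvanishing adjoint linear system on 3fold} shows $\delta(T)\le 18$, so one may take $t_1=\delta(T)\le 18$; in the subcase where only $\delta(T)\le 10$ is known but $p_g(S)=0$, the weaker thresholds still suffice. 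The main obstacle I expect is the bookkeeping in the pencil case $\dim(\Gamma')=1$: one must carefully peel off enough copies of $S$ to separate two general fibers while still retaining a divisor whose restriction to the generic $S$ remains big enough to apply Lemma \ref{lem: adjoint linear system surface}, and this is exactly where the coefficient $4t_1+3$ (rather than something smaller) is forced.
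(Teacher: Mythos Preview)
Your overall strategy---reduce to simple normal crossings, restrict to $S$ via Kawamata--Viehweg vanishing, apply Lemma \ref{lem: adjoint linear system surface} on $S$ using the inequality $(\lambda-\frac{t_1}{a_{t_1,T}})\frac{a_{t_1,T}}{t_1+a_{t_1,T}}\ge\frac{\lambda-t_1}{t_1+1}>3$, and conclude via the birationality principle---matches the paper exactly. The point of divergence, and the genuine gap, is in how you show that $|K_{T'}+\roundup{Q_\lambda}|$ separates different generic irreducible elements of $|N|$.

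Your case split for this step does not close. In the pencil case you write $K_{T'}+\roundup{Q_\lambda}\ge K_{T'}+\roundup{Q_{\lambda'}}+(a_{t_1,T}-1)S+(\text{eff.})$ and assert ``already two fibers appear'' from $a_{t_1,T}\ge P_{t_1}(T)-1$; but $P_{t_1}(T)\ge 2$ only yields $a_{t_1,T}\ge 1$, so $(a_{t_1,T}-1)S$ may be zero (e.g.\ a rational pencil with $P_{t_1}(T)=2$), and separation is not established. The paper bypasses this bookkeeping entirely: it subtracts the full $E_N+a_{t_1,T}S\equiv t_1\pi_T^*(K_T)$ from $Q_\lambda$, applies Lemma \ref{lem: nonvanishing adjoint linear system on 3fold} to the resulting divisor to obtain
\[
h^0\bigl(T',\,K_{T'}+\roundup{Q_\lambda-E_N-a_{t_1,T}S}\bigr)\ge 2
\]
whenever $\lambda-t_1>3t_1+2$ (i.e.\ $\lambda>4t_1+2$), and then invokes \cite[Lemma~2.7]{Chen10} with $V=T'$, $R=Q_\lambda-E_N-a_{t_1,T}S$, $L=\mathrm{Mov}|t_1K_{T'}|$ to conclude that $|K_{T'}+\roundup{Q_\lambda-E_N}|$, and hence $|K_{T'}+\roundup{Q_\lambda}|$, separates generic irreducible elements of $|N|$---uniformly, without distinguishing the pencil and non-pencil cases. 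This is the ``peeling off'' you anticipated, but executed in one stroke via the cited separation lemma rather than ad hoc.

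For the final assertion, your subcase discussion is unnecessary: Remark \ref{rem: nonvanishing adjoint linear system on 3fold} already allows one to take $t_1\le 18$ in every case, and $4\cdot 18+3=75$.
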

\begin{proof}
	By \cite[Lemma 2.8]{Chen10}, we can assume that $\mathrm{Supp}(Q_\lambda)\cup \mathrm{Supp}(E_{\pi_T}+E_N)$ has only simple normal crossings, where $\mathrm{Supp}(E_{\pi_T})$ is the support of all $\pi_T$-exceptional divisors.
	
	By  Lemma \ref{lem: nonvanishing adjoint linear system on 3fold}, if $\lambda>4t_1+2$, we have
	$$
	h^0(T', K_{T'}+\roundup{Q_\lambda-E_N-a_{t_1,T}S})\ge 2.
	$$
	By \cite[Lemma 2.7]{Chen10}(take $V=T'$, $R=Q_\lambda-E_N-a_{t_1,T}S$, $L=\mathrm{Mov}|t_1K_{T'}|$), $|K_{T'}+\roundup{Q_\lambda-E_N}|$ separates different generic irreducible elements in $\mathrm{Mov}|t_1K_{T'}|$. Thus $|K_{T'}+\roundup{Q_\lambda}|$ separates different generic irreducible elements in $\mathrm{Mov}|t_1K_{T'}|$.
	
	By Theorem \ref{bir. prin.}, we only need to show that $\varphi_{|K_{T'}+\roundup{Q_\lambda}|}|_S$ is birational. By the same argument as in the proof of Lemma \ref{lem: nonvanishing adjoint linear system on 3fold}, we have
	\begin{align*}
	|K_{T'}+\roundup{Q_\lambda}||_S\lsgeq&|K_S+\roundup{(Q_\lambda-\frac{1}{a_{t_1,T}}E_N-S)|_S}| \\
	\lsgeq& |K_S+\roundup{(Q_\lambda-\frac{1}{a_{t_1,T}}E_N-S)|_S-(\lambda-\frac{t_1}{a_{t_1,T}})H}|,
	\end{align*}
	where $H$ is the same effective $\mathbb{Q}$-divisor as in the proof of Lemma \ref{lem: nonvanishing adjoint linear system on 3fold}.
	Note that we have
	$$
	(Q_\lambda-\frac{1}{a_{t_1,T}}E_N-S)|_S-(\lambda-\frac{t_1}{a_{t_1,T}})H\equiv (\lambda-\frac{t_1}{a_{t_1,T}})\cdot\frac{a_{t_1,T}}{t_1+a_{t_1,T}}\sigma^*(K_{S_0})
	$$
	and
	$$
	(\lambda-\frac{t_1}{a_{t_1,T}})\cdot\frac{a_{t_1,T}}{t_1+a_{t_1,T}}\ge \frac{\lambda-t_1}{t_1+1}>3,
	$$
	where the last inequality holds by our assumption $\lambda>4t_1+3$. Thus
	$\varphi_{|K_S+\roundup{(Q_\lambda-\frac{1}{a_{t_1,T}}E_N-S)|_S-(\lambda-\frac{t_1}{a_{t_1,T}})H}|}$ is birational by Lemma \ref{lem: adjoint linear system surface}. We deduce that $\varphi_{|K_{T'}+\roundup{Q_\lambda}|}$ is birational if $\lambda>4t_1+3$.  By Remark \ref{rem: nonvanishing adjoint linear system on 3fold}, we may take $t_1\le 18$. Thus $\varphi_{|K_{T'}+\roundup{Q_\lambda}|}$ is birational onto its image if $\lambda>75$.
	
\end{proof}

\section{\bf Proof of the main theorem}

As an overall discussion, we keep the notation and settings in \ref{setup} and \ref{notations}.

Let $m\ge 38m_0+39$ be a positive integer. Since
$$
(m-1)\pi^*(K_Y)-\frac{1}{\theta_{m_0}}E_{m_0}-T'\equiv (m-1-\frac{m_0}{\theta_{m_0}})\pi^*(K_Y)
$$
is nef and big and it has only simple normal crossings, by Kawamata-Viehweg vanishing theorem and \eqref{eq: adjunction Y}, we have
\begin{align}\label{eq: restriction on T}
|mK_{Y'}||_{T'}\lsgeq& |K_{Y'}+\roundup{(m-1)\pi^*(K_Y)-\frac{1}{\theta_{m_0}}E_{m_0}}||_{T'}\nonumber\\
\lsgeq& |K_{T'}+\roundup{((m-1)\pi^*(K_Y)-\frac{1}{\theta_{m_0}}E_{m_0}-T')|_{T'}}|
\end{align}
By \eqref{cri}, we may write
$$
\pi^*(K_Y)|_{T'}\equiv \frac{\theta_{m_0}}{m_0+\theta_{m_0}}\pi_T^*(K_T)+G,
$$
where $G$ is an effective $\mathbb{Q}$-divisor. By \eqref{eq: restriction on T}, we have
\begin{equation}\label{eq: restriction on T 2}
\resizebox{\linewidth}{!}{$|mK_{Y'}||_{T'}\lsgeq |K_{T'}+\roundup{((m-1)\pi^*(K_Y)-\frac{1}{\theta_{m_0}}E_{m_0}-T')|_{T'}-(m-1-\frac{m_0}{\theta_{m_0}})G}|.$}
\end{equation}
Note that we have
\begin{align*}
&((m-1)\pi^*(K_Y)-\frac{1}{\theta_{m_0}}E_{m_0}-T')|_{T'}-(m-1-\frac{m_0}{\theta_{m_0}})G\\
\equiv& (m-1-\frac{m_0}{\theta_{m_0}})\pi^*(K_Y)|_{T'}-(m-1-\frac{m_0}{\theta_{m_0}})G\\
\equiv &(m-1-\frac{m_0}{\theta_{m_0}})\cdot\frac{\theta_{m_0}}{m_0+\theta_{m_0}}\pi_T^*(K_T).
\end{align*}
Since
$$
(m-1-\frac{m_0}{\theta_{m_0}})\cdot\frac{\theta_{m_0}}{m_0+\theta_{m_0}}\ge \frac{m-1-m_0}{m_0+1},
$$
we have $(m-1-\frac{m_0}{\theta_{m_0}})\cdot\frac{\theta_{m_0}}{m_0+\theta_{m_0}}>37$ if $m>38(m_0+1)$, and $(m-1-\frac{m_0}{\theta_{m_0}})\cdot\frac{\theta_{m_0}}{m_0+\theta_{m_0}}>75$ if $m>76(m_0+1)$.

By Remark \ref{rem: nonvanishing adjoint linear system on 3fold} and \eqref{eq: restriction on T 2}, we have $P_m(Y)\ge 2$ for all $m\ge 38m_0+39$. This proves (1).

For (2), by Lemma \ref{lem: biration adjoint linear system on 3fold} and \eqref{eq: restriction on T 2}, $\varphi_{|mK_{Y'}|}|_{T'}$ is birational for all $m\ge 76m_0+77$. By Theorem \ref{bir. prin.}, we only need to show that $|mK_{Y'}|$ separates different generic irreducible elements of $|M_{m_0}|$. By the same argument as in the proof of (1), we have
$$
h^0(Y', K_{Y'}+\roundup{(m-1)\pi^*(K_Y)-E_{m_0}-\theta_{m_0}T'})\ge 2
$$
for all $m \!\ge\! 38m_0+39$.\! By \!\cite[Lemma 2.7]{Chen10}, $|K_{Y'}+\roundup{(m-1)\pi^*(K_Y)\!-\!
E_{m_0}}|$ separates different generic irreducible elements of $|M_{m_0}|$ for all $m\ge 38m_0+39$ (set $V=Y'$, $R=(m-1)\pi^*(K_Y)-E_{m_0}-\theta_{m_0}T'$ and $L=M_{m_0}$). We conclude that $\varphi_{|mK_Y|}$ is birational for all $m\ge 76m_0+77$. This proves (2).

For (3), we have $\pi^*(K_Y)|_{T'}\ge\frac{1}{1+m_0}\pi_T^*(K_T)$ by \eqref{cri}. We deduce that
\begin{align*}
K_Y^4\ge \frac{1}{m_0}(\pi^*(K_Y)|_{T'})^3
\ge \frac{1}{m_0(m_0+1)^3}K_T^3
\ge  \frac{1}{1680m_0(m_0+1)^3},
\end{align*}
where the first inequality follows by \eqref{eq: mov mK} and the last inequality follows by \cite[Theorem 1.6]{EXP3}. The proof is completed.

{\bf Acknowledgment}. 
The author would like to express great gratitude to Professor Meng Chen for his suggestion of this problem and his guidance over this paper. The author is also obliged to Yong Hu for the enlightening discussions of the problem and his help in making the paper much more readable.

The author is supported by the Fundamental Research Funds for the Central Universities (Grant No. 2205012).



\begin{thebibliography}{999999}

\bibitem{BCHM}  C.  Birkar, P. Cascini, C. D. Hacon and J. McKernan, {\em Existence of minimal models for varieties of log general type}, J. Amer. Math. Soc. {\bf 23} (2010), no. 2, 405--468.  %

\bibitem{Bom} E. Bombieri, {\em Canonical models of surfaces of general type}, Inst. Hautes \'Etudes Sci. Publ. Math. {\bf 42} (1973), 171--219. %

\bibitem{EXP1} J. A. Chen and M. Chen, {\em Explicit birational geometry of threefolds of general type, I}, Ann. Sci. \'Ec. Norm. Sup\'er. {\bf 43} (2010), 365--394.

\bibitem{EXP2} J. A. Chen and M. Chen, {\em Explicit birational geometry of threefolds of general type, II}, J. Differ. Geom. {\bf 86} (2010), 237--271.%

\bibitem{EXP3} J. A. Chen and M. Chen, {\em Explicit birational geometry for 3-folds and 4-folds of general type, III}, Compos. Math. {\bf 151} (2015), 1041--1082.%

\bibitem{Q-div} M. Chen, {\em On the $\mathbb{Q}$-divisor method and its application}, J. Pure Appl. Algebra {\bf 191} (2004), 143--156.%

\bibitem{MA} M. Chen, {\em A sharp lower bound for the canonical volume of 3-folds of general type}, Math. Ann. {\bf 337} (2007), 887--908.%

\bibitem{Chen10} M. Chen, {\em On pluricanonical systems of algebraic varieties of general type}, in {\em Algebraic geometry in East Asia-Seoul 2008} Advanced Studies in Pure Mathematics, vol. 60 (Mathematical Society of Japan, Tokyo, 2010), 215-236.%

\bibitem{C14} M. Chen, {\em Some birationality criteria on 3-folds with $p_g>1$}, Sci. China Math.  {\bf 57}(2014), 2215--2234.%

\bibitem{Delta18} M. Chen, {\em On minimal 3-folds of general type with maximal pluricanonical section index}, Asian J. Math. {\bf 22} (2018), No. 2, 257-268. %

\bibitem{CJL} M. Chen, C. Jiang and B. Li, {\em On minimal varieties growing from quasismooth weighted hypersurfaces}. arXiv:2005.09828.%

\bibitem{CHP} M. Chen, Y. Hu and M. Penegini, {\em On projective threefolds of general type with small positive geometric genus}, Electron. Res. Arch. {\bf 29}(2021), no. 3, 2293--2323.%

\bibitem{H-M} C. D. Hacon and J. McKernan, {\em Boundedness of pluricanonical maps of varieties of general type}, Invent. Math. {\bf 166} (2006), 1--25. %

\bibitem{IF} A.R. Iano-Fletcher, {\em Working with weighted complete intersections}, in: Explicit birational geometry of 3-folds, London Mathematical Society Lecture Note Series, vol. 281 (Cambridge University Press, Cambridge, 2000), 101--173. %

\bibitem{KV} Y. Kawamata, {\em A generalization of Kodaira-Ramanujam's vanishing theorem}, Math. Ann. {\bf 261}(1982), 43-46. %

\bibitem{KawaE} Y. Kawamata,  {\em On the extension problem of pluricanonical forms}, in Algebraic geometry: Hirzebruch 70 (Warsaw, 1998), pp. 193--207, Contemp. Math. {\bf 241}, Amer. Math. Soc., Providence, RI, 1999. %

\bibitem{KMM}   Y. Kawamata, K. Matsuda and K. Matsuki, {\em Introduction to the minimal model problem}, in Algebraic geometry, Sendai, 1985, pp. 283--360, Adv. Stud. Pure Math. {\bf 10}, North-Holland, Amsterdam, 1987.  %

\bibitem{Kob} M. Kobayashi, {\em On Noether's inequality for threefolds}, J. Math. Soc. Japan {\bf 44} (1992), no.1, 145-156.

\bibitem{Kol} J. Koll\'ar, {\em Higher direct images of dualizing sheaves I}, Ann. Math. {\bf 123}(1986), no.1, 11-42.%

\bibitem{K-M} J. Koll\'ar and S. Mori, {\em Birational geometry of algebraic varieties}, Cambridge Tracts in Mathematics, {\bf 134}, Cambridge University Press, Cambridge, 1998. viii+254 pp. %

\bibitem{Siu}  Y. T. Siu, {\em Finite generation of canonical ring by analytic method}, Sci. China Ser. A {\bf 51} (2008), no. 4, 481--502.  %

\bibitem{Ta} S. Takayama, {\em Pluricanonical systems on algebraic varieties of general type}, Invent. Math. {\bf 165} (2006), 551--587. %

\bibitem{Tsuji}  H. Tsuji, {\em Pluricanonical systems of projective varieties of general type. I}, Osaka J. Math. {\bf 43} (2006), no. 4, 967--995. %

\bibitem{VV} E. Viehweg, {\em Vanishing theorems}, J. Reine Angew. Math. {\bf 335} (1982), 1-8. %

\bibitem{Y} J. Yan, {\em On minimal 4-folds of general type with $p_g \geq 2$}, Electron. Res. Arch. {\bf 29}(2021), no. 5, 3309--3321.%

\end{thebibliography}
\end{document}